\newtheorem{thm}{Theorem}[section]
\newtheorem{cor}[thm]{Corollary}
\newtheorem{lem}[thm]{Lemma}
\newtheorem{prop}[thm]{Proposition}
\theoremstyle{definition}
\newtheorem{defn}[thm]{Definition}
\newtheorem{ques}[thm]{Question}
\newcommand{\PP}{\ensuremath\mathbb{P}}
\newcommand{\calO}{\ensuremath\mathcal{O}}
\begin{document}
\title{Rational curves on Fermat hypersurfaces}
\author{Mingmin Shen}
\address{DPMMS, University of Cambridge, Wilberforce Road, Cambridge CB3 0WB, UK}
\email{M.Shen@dpmms.cam.ac.uk}

\subjclass[2010]{14C05,14D10}

\keywords{rational curve, Fermat hypersurface, positive
characteristic}
\date{}
\maketitle

\begin{abstract}
In this note we study rational curves on degree $p^r+1$ Fermat
hypersurface in $\PP^{p^r+1}_k$, where $k$ is an algebraically
closed field of characteristic $p$. The key point is that the
presence of Frobenius morphism makes the behavior of rational curves
to be very different from that of charateristic 0. We show that if
there exists $N_0$ such that for all $e\geq N_0$ there is a degree
$e$ very free rational curve on $X$, then $N_0> p^r(p^r-1)$.
\end{abstract}

\section{Introduction}
Rational curves appear to be very important in the study of higher
dimensional algebraic varieties. We refer to \cite{kollar} for the
background. Let $X$ be a smooth projective variety over an
algebraically closed field $k$.
\begin{defn}
A rational curve $f:C\cong\PP^1\rightarrow X$ is free (resp. very
free) if $f^*T_X$ is globally generated (resp. ample). We say that
$X$ is \textit{separably rationally connected} (SRC) if there is a
very free rational curve on $X$.
\end{defn}

\begin{defn}
$X$ is \textit{rationally connected} (RC) if a general pair of
points can be connected by a rational curve. This means that there
is a family of rational curves $\pi:U\rightarrow Y$ together with a
morphism $u:U\rightarrow X$ such that the natural map
$u^{(2)}:U\times_Y U\rightarrow X\times_k X$ is dominant. If we only
require the general fiber of $\pi$ to be a genus 0 curve, then we
say that $X$ is rationally chain connected.
\end{defn}
One very important tool to study rational curves is deformation
theory. This works especially well in characteristic 0. For example,
it is easy to see that SRC implies RC in any characteristic. But if
the characteristic is 0, then RC is equivalent to SRC. One very
important class of rationally connected varieties is provided by the
following
\begin{thm}(\cite{kmm},\cite{campana})
Smooth Fano varieties over a field of characteristic 0 are
rationally connected.
\end{thm}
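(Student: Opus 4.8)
The plan is to deduce rational connectedness from the existence of a \emph{very free} rational curve, exploiting the fact that in characteristic $0$ the notions SRC, RC, and rational chain connectedness coincide. I would organize the argument in three stages: first produce rational curves through general points from the Fano condition, then upgrade them to free and finally to very free curves by a gluing-and-smoothing procedure, and lastly use a very free curve to connect a general pair of points.

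\emph{Stage one: existence of rational curves.} The hypothesis that $-K_X$ is ample enters only to guarantee a plentiful supply of low-degree rational curves. I would invoke Mori's bend-and-break. The Riemann--Roch estimate gives that the space of maps $\Hom(\PP^1,X)$ sending a fixed point to $x$ has dimension at least $(-K_X)\cdot C$; when a family of curves through a fixed point is large relative to its degree, bend-and-break forces the curve to degenerate, producing a rational curve still passing through $x$. Ampleness of $-K_X$ bounds the degrees and lets one iterate this to conclude that through every point there is a rational curve of $(-K_X)$-degree at most $\dim X+1$; in particular $X$ is uniruled. It is worth noting, and somewhat ironic given this paper's theme, that Mori's production of this first rational curve in characteristic $0$ itself proceeds by reduction modulo $p$ and use of the Frobenius.

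\emph{Stage two: from rational to very free.} This is the technical heart and the main obstacle. Take a minimal-degree rational curve $f:\PP^1\to X$ through a general point. In characteristic $0$, generic smoothness of a dominant family of rational curves forces the general such $f$ to be \emph{free}, i.e. $f^*T_X=\bigoplus\calO_{\PP^1}(a_i)$ with all $a_i\geq 0$. To make it very free (all $a_i>0$) I would attach a collection of general free curves to $f$ at distinct general points, forming a \emph{comb}, and then smooth the comb. The essential deformation-theoretic inputs are: (a) a comb with free handle and free teeth attached at general points has unobstructed deformations and smooths to an irreducible rational curve; and (b) each such smoothing raises the positivity of the pullback tangent bundle, so that after attaching enough teeth one obtains $f^*T_X$ ample. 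Both the general-position arguments (Bertini/Kleiman) and the assertion that general deformations of free curves stay free rely essentially on characteristic $0$; this is precisely the mechanism whose failure in characteristic $p$ is studied in the rest of this paper.

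\emph{Stage three: connecting two general points.} Given a very free rational curve $g:\PP^1\to X$, fix two general marked points $0,\infty\in\PP^1$ and consider the universal deformation $\pi:U\to Y$ of $g$ with evaluation $u:U\to X$. Ampleness of $g^*T_X$ means every $a_i\geq 1$, whence $H^1(\PP^1,g^*T_X(-0-\infty))=0$, so the deformations of $g$ move the images of $0$ and $\infty$ independently. Therefore the two-pointed evaluation $u^{(2)}:U\times_Y U\to X\times_k X$ is smooth and dominant near $(g(0),g(\infty))$, which is exactly the definition of rational connectedness. I expect the comb-smoothing step of stage two to absorb essentially all the difficulty, the first and third stages being standard applications of bend-and-break and deformation theory respectively.
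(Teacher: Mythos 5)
The paper offers no proof of this theorem---it is quoted from \cite{kmm} and \cite{campana}---so your proposal must be judged on its own terms, and there it has a genuine gap, located in stage two. As written, the Fano hypothesis enters your argument only through stage one, where bend-and-break produces rational curves of bounded degree through every point, i.e.\ uniruledness. Stages two and three then claim to upgrade this to rational connectedness using only comb-smoothing of free curves. If that worked, it would prove that every smooth projective uniruled variety in characteristic $0$ is rationally connected, which is false: $\PP^1\times E$, with $E$ an elliptic curve, is uniruled but not rationally connected. The assertion that fails is your claim (b), that attaching enough general free teeth to a free handle and smoothing eventually makes the pullback of $T_X$ ample. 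Free rational curves through very general points lie in fibers of the maximal rationally connected fibration; teeth meeting the handle therefore lie in the same fiber as the handle, the whole comb and all of its smoothings stay in that fiber, and the tangent directions transverse to the fiber never acquire any positivity. (In $\PP^1\times E$ the phenomenon is even starker: every rational curve is a fiber $\PP^1\times\{pt\}$, and these are pairwise disjoint, so no comb can even be assembled.)

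The missing idea is that ampleness of $-K_X$ must be used a second time, and more deeply. In \cite{kmm} one runs bend-and-break while holding \emph{two} points fixed, with ampleness of $-K_X$ bounding degrees at every step, to conclude that any two points of $X$ are joined by a chain of rational curves of bounded degree; this rational chain connectedness of Fano varieties holds in every characteristic (see V.2 of \cite{kollar}, as the paper itself notes). Only then does characteristic $0$ enter: a chain joining two general points has all components moving in dominating families, hence free by generic smoothness, and such a chain of free curves can be smoothed into an irreducible---indeed very free---rational curve. That is the legitimate place for your comb argument, with the two-point chain, not a single free curve, as the starting configuration; the chain structure is precisely what guarantees positivity in all directions, since the chain already connects points that are in general position in $X$. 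Your stage three is fine: once a very free curve exists, the two-pointed evaluation argument you give does yield rational connectedness.
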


The case of characteristic $p$ is still mystery. We know that all
smooth Fano varieties are rationally chain connected, see V.2 of
\cite{kollar}. Koll\'ar has constructed examples of singular Fano's
that are not SRC, see V.5 of \cite{kollar}. This naturally leads to
the question whether all smooth Fano varieties are SRC. Recently Y.
Zhu has proved that a general Fano hypersurface is SRC, see
\cite{zhu}.

In this note, we consider a class of very special Fano hypersurfaces
over a field of positive characteristic. From now on, we fix $k$ to
be an algebraically closed field of characteristic $p>0$. Let
$X=X_{d,N}\subset \PP_k^N$ be the Fermat hypersurface defined by
$$
X_0^d+X_1^d+\cdots +X_N^d=0,
$$
where $d=p^r+1>3$ and $N\geq 3$. These hypersurfaces are special
since they are always unirational, see Th\'eor\`em 3.8 of
\cite{conduche}. Note that for fixed $N$, the hypersurface $X_{d,N}$
is of general type if $d>N$. Hence they are examples of varieties of
general type which are unirational. The variety $X$ is Fano if and
only if $d\leq N$. This condition is necessary for $X$ to have a
very free rational curve. In this case, we ask the following
\begin{ques}
If $d\leq N$, is there a very free rational curve on $X$?
\end{ques}
A positive answer for the cases when $N\geq 2p^r-1$ was given by
Corollaire 3.17 of \cite{conduche}. The following lemma shows that a
positive answer for large $N$ can always be deduced from a positive
answer for the case of a smaller $N$.
\begin{lem}\label{reduce dimension}
If $X_{d,N}$ contains a very free rational curve for $N=p^r+1$, then
$X_{d,N}$ contains a very free rational curve for all $N\geq p^r+1$.
\end{lem}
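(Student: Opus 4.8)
The plan is to induct on $N$, with the base case $N=p^r+1$ supplied by the hypothesis, so that it suffices to produce a very free rational curve on $X_{d,N+1}$ out of one on $X_{d,N}$. The key geometric observation is that $X_{d,N}$ sits inside $X_{d,N+1}$ as a hyperplane section: intersecting the Fermat hypersurface $X_0^d+\cdots+X_{N+1}^d=0$ in $\PP^{N+1}_k$ with the hyperplane $\{X_{N+1}=0\}\cong\PP^N_k$ recovers exactly $X_{d,N}$. This yields a closed immersion $\iota:X_{d,N}\hookrightarrow X_{d,N+1}$. All the hypersurfaces in sight are smooth because $d=p^r+1$ is prime to $p$, so that the partial derivatives of $\sum_j X_j^d$ are (up to the unit $d$) the monomials $X_i^{d-1}$, which have no common zero in projective space; and the normal bundle of this hyperplane section is $\calO_{X_{d,N}}(1)$.

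Given a very free rational curve $g:\PP^1\to X_{d,N}$, I would study the composite $\iota\circ g:\PP^1\to X_{d,N+1}$ by pulling back the normal bundle sequence of $\iota$ along $g$, obtaining
$$0\to g^*T_{X_{d,N}}\to (\iota\circ g)^*T_{X_{d,N+1}}\to g^*\calO_{X_{d,N}}(1)\to 0.$$
Here $g^*T_{X_{d,N}}$ is ample by the very freeness hypothesis, while $g^*\calO_{X_{d,N}}(1)$ is the pullback of the ample bundle $\calO_{X_{d,N}}(1)$ under a non-constant map from $\PP^1$, hence a line bundle of positive degree and in particular ample. Thus $(\iota\circ g)^*T_{X_{d,N+1}}$ is an extension of an ample bundle by an ample bundle on $\PP^1$, and it remains to see that such an extension is itself ample.

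For this last point I would invoke the criterion that a vector bundle on a smooth projective curve is ample if and only if each of its quotient line bundles has positive degree. Given an extension $0\to A\to E\to B\to 0$ on $\PP^1$ with $A$ and $B$ ample and a quotient line bundle $L$ of $E$, there are two cases: if the composite $A\to E\to L$ is nonzero then $L$ receives a nonzero map from some positive-degree summand of $A$, forcing $\deg L>0$; otherwise $L$ factors through $B$, and $\deg L>0$ by the ampleness of $B$. Either way $\deg L>0$, so $E$ is ample. Applying this to the displayed sequence shows that $\iota\circ g$ is very free on $X_{d,N+1}$, which completes the inductive step and hence the proof.

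The only genuine subtlety is this extension lemma: ampleness of vector bundles is not preserved by extensions in general, and it is precisely the quotient-line-bundle criterion on a smooth curve that rescues the argument. Everything else — the smoothness of the Fermat hypersurfaces in characteristic $p$, the identification of $X_{d,N}$ as the hyperplane section $\{X_{N+1}=0\}$ of $X_{d,N+1}$, and the computation of the normal bundle — is routine.
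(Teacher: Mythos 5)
Your proof is correct and takes the same route as the paper's, which simply observes that $X_{d,N}$ is a hyperplane section of $X_{d,N+1}$ and leaves the rest to the reader; you have supplied the missing details (the normal bundle sequence, its pullback along $g$, and the ampleness of the resulting extension on $\PP^1$). The only quibble is your closing aside: an extension of an ample vector bundle by an ample vector bundle is in fact ample on any projective scheme, not just on a curve, though your elementary argument via quotient line bundles on $\PP^1$ is of course perfectly adequate here.
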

\noindent Hence we see that the most interesting case is when $d=N$.
Our first observation is
\begin{prop}\label{dimension jump}
Let $X=X_{d,d}$ be the Fermat hypersurface of degree $d=p^r+1$ in
$\PP_k^{d}$ and $M_e$ be the space of degree $e$ morphisms from
$\PP^1$ to $X$. Then for $M_e$ to have the expected dimension, $e$
has to be at least $p^r-1$. In particular, if $e<p^r-1$ then there
is no free rational curve of degree $e$.
\end{prop}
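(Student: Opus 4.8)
The plan is to make everything explicit in coordinates and to exploit the rigidity forced by the Frobenius: when $d=p^r+1$ the partial derivatives of the defining form are essentially $p^r$-th powers, and for $e<p^r$ this causes the single degree-$d$ equation cutting out $M_e$ to collapse into very few bilinear equations, which is precisely what makes the dimension jump. I begin by recording the expected number. Since $X=X_{d,d}\subset\PP^d_k$ has $\dim X=d-1=p^r$ and, by adjunction, $-K_X=\calO_X(1)$, a degree $e$ morphism $f\colon\PP^1\to X$ has $\deg f^*T_X=e$, so the expected dimension of $M_e$ at $[f]$ is $\chi(\PP^1,f^*T_X)=\deg f^*T_X+\dim X=e+p^r=e+d-1$. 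Deformation theory gives the a priori bounds $\chi\le\dim_{[f]}M_e\le h^0(\PP^1,f^*T_X)=\dim T_{[f]}M_e$, so ``$M_e$ has the expected dimension'' means $\dim M_e=e+d-1$.

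Next I set up coordinates. Write $f=[f_0:\cdots:f_d]$ with $f_i=\sum_{a=0}^e b_{i,a}s^a t^{e-a}$, and collect the coefficients into a $(d+1)\times(e+1)$ matrix $B=(b_{i,a})$; the tuple is a genuine morphism exactly when the $f_i$ have no common zero. Because $d=p^r+1$ we have $f_i^d=f_i^{p^r}f_i$ with $f_i^{p^r}=\sum_a b_{i,a}^{p^r}s^{ap^r}t^{(e-a)p^r}$, so the coefficient of $s^{ap^r+c}t^{(e-a)p^r+(e-c)}$ in $\sum_i f_i^d$ equals $\sum_i b_{i,a}^{p^r}b_{i,c}$. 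The crucial input is the hypothesis $e<p^r$: for $(a,c)\in\{0,\dots,e\}^2$ the $s$-exponents $ap^r+c$ are pairwise distinct, so none of these monomials collide. Hence the single equation $\sum_i f_i^d=0$ is equivalent to the system $\sum_i b_{i,a}^{p^r}b_{i,c}=0$ for all $0\le a,c\le e$, i.e. to the matrix identity $(B^{[p^r]})^{T}B=0$, which is a system of at most $(e+1)^2$ equations in the $(d+1)(e+1)$ entries of $B$.

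Now the dimension count finishes both assertions. The affine cone $\widetilde M_e$ over $M_e$ lies in $\A^{(d+1)(e+1)}$ cut out by these at most $(e+1)^2$ equations, so by Krull's height theorem every component through every point of $\widetilde M_e$ has dimension at least $(d+1)(e+1)-(e+1)^2=(e+1)(d-e)$. Dividing by the free scaling action and using that the no-common-zero locus is open, at every point of $M_e$ we get $\dim_{[f]}M_e\ge(e+1)(d-e)-1=(e+d-1)+e(p^r-1-e)$. For $0<e<p^r-1$ the correction $e(p^r-1-e)$ is strictly positive, so $M_e$ is everywhere of dimension strictly larger than $e+d-1$; thus it can have the expected dimension only when $e\ge p^r-1$. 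For the last clause, a free curve $f$ would have $f^*T_X$ globally generated, hence $h^1(\PP^1,f^*T_X)=0$ and $h^0=\chi=e+d-1$, giving $\dim_{[f]}M_e\le h^0=e+d-1$ and contradicting the strict lower bound just proved.

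I expect the main obstacle to be conceptual rather than computational: one must recognize that the Frobenius collapses the top-degree condition into only $O(e^2)$ bilinear equations instead of the naive $de+1$, and that this collapse, forced by $e<p^r$, is the exact source of the excess dimension. A secondary point requiring care is that characteristic $p$ invalidates the usual generic-smoothness and rank-of-differential estimates; this is why I bound the dimension of $M_e$ from below by counting equations, an argument valid in every characteristic, rather than attempting to control it from above.
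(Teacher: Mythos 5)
Your proof is correct and follows essentially the same route as the paper: you expand $f_i^d=f_i^{p^r}f_i$ in coordinates and observe that for $e<p^r-1$ many of the $de+1$ coefficient equations defining $M_e$ vanish identically, forcing excess dimension. You are in fact more complete than the paper, which only exhibits the $e$ trivially vanishing coefficients of $t^{jp^r-1}$ and asserts the conclusion, whereas you identify all $(e+1)^2$ surviving bilinear equations and make the lower bound $\dim_{[f]}M_e\geq (e+d-1)+e(p^r-1-e)$ rigorous via Krull's height theorem.
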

\noindent This is a special case of Th\'eor\`eme 3.16(a) of
\cite{conduche}. For very free rational curves on $X_{d,d}$ we have
the following
\begin{thm}\label{main theorem}
Let $X=X_{d,d}$ be the Fermat hypersurface of degree $d=p^r+1$ in
$\PP_k^{d}$. Let $f:C=\PP^1\rightarrow X$ be a rational curve of
degree $e$. If $mN<e\leq (m+1)(N-1)$ for some $0\leq m\leq N-3$,
then $f$ is not very free.
\end{thm}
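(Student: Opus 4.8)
The plan is to translate ``very free'' into a cohomological non-vanishing and then exploit the defining feature of the Fermat equation in characteristic $p$: writing $q=p^r=d-1$ (so that $d=N=q+1$), one has $\partial(\sum_j X_j^d)/\partial X_i = d\,X_i^{d-1}=X_i^{q}$, so the conormal map $\calO_X(-d)\hookrightarrow\Omega^1_{\PP^N}|_X$, equivalently the normal surjection $T_{\PP^N}|_X\twoheadrightarrow\calO_X(d)$, is governed by the $q$-th power monomials. Since $f^*T_X=\bigoplus_i\calO(a_i)$ on $\PP^1$, $f$ is very free iff every $a_i\ge1$, i.e.\ iff $H^1(\PP^1,f^*T_X(-2))=0$. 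Write $f=[f_0:\cdots:f_N]$ with the $f_i$ of degree $e$. Twisting the pulled-back Euler and normal-bundle sequences by $\calO(-2)$ and using that $H^1(\calO(e-2))=0$ for $e\ge1$, I would show $H^1(f^*T_{\PP^N}(-2))=0$, whence $H^1(f^*T_X(-2))=\mathrm{coker}\,\bar\psi$ for $\bar\psi\colon H^0(f^*T_{\PP^N}(-2))\to H^0(\calO(de-2))$. The composite of $\bar\psi$ with the Euler map is the multiplication map $\mu\colon\bigoplus_{i=0}^N H^0(\calO(e-2))\to H^0(\calO(de-2))$, $(g_i)\mapsto\sum_i f_i^{q}g_i$, and $H^0(f^*T_{\PP^N}(-2))$ exceeds $\bigoplus_i H^0(\calO(e-2))$ only by the one-dimensional $H^1(\calO(-2))$. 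Hence $\dim\mathrm{coker}\,\bar\psi\ge\dim\mathrm{coker}\,\mu-1$, and it suffices to prove $\dim\mathrm{coker}\,\mu\ge2$.

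The key device is a Frobenius block decomposition of $\mu$. Each $f_i^{q}$ lies in $k[s^q,t^q]$, over which $k[s,t]$ is free with basis $\{s^at^b:0\le a,b\le q-1\}$; since multiplication by the $f_i^q$ respects the resulting decomposition into cosets, $\mu$ splits as $\bigoplus_{(a,b)}\mu_{a,b}$. Setting $u=s^q,\,v=t^q$ and letting $\tilde f_i\in k[u,v]_e$ be the form with $f_i^q=\tilde f_i(u,v)$, the block $\mu_{a,b}$ is the multiplication map $\bigoplus_i k[u,v]_{c}\to k[u,v]_{c+e}$ with $c=c_{a,b}=(e-2-a-b)/q$, present exactly when $q\mid(e-2-a-b)$ and $0\le a+b\le e-2$. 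Thus $\mathrm{coker}\,\mu=\bigoplus_{(a,b)}\mathrm{coker}\,\mu_{a,b}$, and each block is an honest question about $N+1=q+2$ forms of degree $e$ in two variables.

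For $m\ge1$ this reduces to counting. The image of $\mu_{a,b}$ has dimension at most $(N+1)\dim k[u,v]_c=(q+2)(c+1)$, so $\dim\mathrm{coker}\,\mu_{a,b}\ge (c+e+1)-(q+2)(c+1)$. I would take the blocks with $c=m-1$, i.e.\ $a+b=e-2-q(m-1)$; the hypothesis $mN<e\le(m+1)(N-1)$ is precisely the statement that $a+b$ lies in $[\,q+m-1,\,2q-2\,]$, so these blocks are admissible, and each contributes cokernel at least $e-mN\ge1$. Counting the pairs $(a,b)$ with this fixed sum and summing, a short computation gives $\dim\mathrm{coker}\,\mu\ge q-m\ge2$ throughout the window (the two ends of the window yield exactly $q-m$, the interior strictly more); here the restriction $m\le N-3$ is what guarantees $q-m\ge2$. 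This beats the $-1$ coming from the Euler term, so $\bar\psi$ is not surjective and $f$ is not very free. Notably, for $m\ge1$ this uses only the Frobenius block structure and a generator count, not the Fermat relation itself.

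The case $m=0$, i.e.\ $0<e\le N-1=q$, is where I expect the real difficulty, and it is the main obstacle. The block with $c=m-1=-1$ does not exist, and the count is inconclusive at $e=q$: there the pertinent blocks have source $k[u,v]_0$, and the $q+2$ forms $\tilde f_i$ could a priori span the $(q+1)$-dimensional target, so nothing prevents the balanced bundle $\calO(1)^{\oplus q}$ on combinatorial grounds. For $e<q$ one still wins cheaply, since $\chi(f^*\Omega^1_X)=q-e>0$ forces $H^0(f^*\Omega^1_X)\ne0$. For the boundary $e=q$ I would instead argue through separability: factoring off the inseparable part of $f$ reduces to a separable map $g$ of degree dividing $q$, for which $dg\ne0$ embeds $\calO(2)$ into $g^*T_X$; since $g^*T_X$ has rank $q$ and degree $\deg g\le q$, a summand of degree $\ge2$ forces another of degree $\le0$, so neither $g^*T_X$ nor $f^*T_X$ is ample. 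The delicate point is thus exactly that the clean dimension count settles $m\ge1$, whereas $e=q$ genuinely requires the equation $\sum_i f_i^{q+1}=0$ (via the inseparability/differential analysis) to exclude the balanced splitting.
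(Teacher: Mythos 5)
Your argument is correct, but it takes a genuinely different route from the paper's. The paper works with the auxiliary bundle $\mathscr{F}=\ker\bigl(\calO_X(1)^{N+1}\xrightarrow{(X_i^{p^r})}\calO_X(p^r+1)\bigr)$, which surjects onto $T_X$ with kernel $\calO_X$, identifies $\mathscr{F}\cong (F^*)^r\Omega^1_{\PP^N}\otimes\calO_X(p^r+1)$, and then computes the splitting type of $f^*\mathscr{F}$ assuming $\Omega^1_{\PP^N}|_C$ is balanced (arguing this is the best case): for $mN<e\le(m+1)(N-1)$ the bottom summand has degree $b_2=e-(m+1)(N-1)$, which is negative, or zero with multiplicity $l'=N-m-1\ge 2$, either of which kills ampleness of $f^*T_X$. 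You instead use the criterion $H^1(f^*T_X(-2))=0$ and bound the cokernel of $(g_i)\mapsto\sum f_i^{q}g_i$ via the Frobenius block decomposition of $k[s,t]$ over $k[s^q,t^q]$ --- which is exactly the coordinate incarnation of the paper's identification of $\mathscr{F}$ with a Frobenius pullback. I have checked your count: for $e=mN+j$ with $1\le j\le q-m$ the chosen blocks give $\dim\operatorname{coker}\mu\ge j(q-m+1-j)\ge q-m\ge 2$, and the Euler term costs only $1$, so the conclusion follows. What your version buys is that it never needs the splitting type of $\Omega^1_{\PP^N}|_C$, so the paper's slightly informal ``balanced is the best case'' reduction is avoided; what it costs is a non-uniform treatment of $m=0$, which you settle by two further (correct) arguments: $\chi(f^*\Omega^1_X)=q-e>0$ for $e<q$, and a separability/differential argument for $e=q$. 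In fact that detour is unnecessary: your own block decomposition already disposes of $m=0$, since for $1\le e\le q$ the cosets $(a,b)$ with $a+b=e-2+q\le 2q-2$ have zero source but nonzero target $k[u,v]_{e-1}$, contributing $e(q-e+1)\ge q\ge 2$ to $\dim\operatorname{coker}\mu$; at $e=q$ the single block $(a,b)=(q-1,q-1)$ alone gives cokernel of dimension $q$. With that observation your proof becomes uniform in $m$ as well.
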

\noindent The case when $m=0$ can be deduced from Th\'eor\`eme
3.16(a) of \cite{conduche}.
\begin{cor}
If there exists $N_0$ such that for all $e\geq N_0$ there is a very
free rational curve of degree $e$ on $X$, then $N_0> p^r(p^r-1)$.
\end{cor}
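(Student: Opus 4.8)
The plan is to extract from Theorem \ref{main theorem} the single largest degree $e$ at which a very free rational curve is prohibited, and then argue by contrapositive. Recall that here $X = X_{d,d}$, so the ambient space is $\PP^d_k$ and thus $N = d = p^r + 1$; since $d > 3$ we have $N = p^r + 1 \geq 4$, so the value $m = N - 3$ is a legitimate choice in the range $0 \leq m \leq N - 3$ of the theorem.

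First I would apply Theorem \ref{main theorem} with this maximal value $m = N - 3$. The corresponding forbidden interval is $(N-3)N < e \leq (N-2)(N-1)$, and its right endpoint simplifies, upon substituting $N = p^r + 1$, to
\[
(N-2)(N-1) = (p^r - 1)\,p^r = p^r(p^r - 1).
\]
Since $(N-3)N = N^2 - 3N < N^2 - 3N + 2 = (N-2)(N-1)$, the integer $e = p^r(p^r-1)$ lies inside this interval (in fact it is its right endpoint, which the theorem includes), so Theorem \ref{main theorem} guarantees that \emph{no} rational curve of degree $e = p^r(p^r-1)$ on $X$ is very free.

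To finish, I would argue by contradiction. Suppose $N_0 \leq p^r(p^r-1)$. Then the degree $e = p^r(p^r-1)$ satisfies $e \geq N_0$, so by hypothesis there would exist a very free rational curve of degree $e$ on $X$, contradicting the previous paragraph. Hence $N_0 > p^r(p^r-1)$, as claimed.

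There is no real obstacle in this argument: all of the substance is contained in Theorem \ref{main theorem}, and the corollary reduces to choosing the extremal admissible index $m = N-3$ and carrying out the arithmetic. The only points requiring a moment's care are confirming that $m = N-3 \geq 0$ and that the extremal degree $e = p^r(p^r-1)$ is genuinely covered (being the closed right endpoint of the interval), both of which follow at once from $d = p^r + 1 > 3$.
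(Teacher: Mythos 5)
Your argument is correct and is precisely the deduction the paper intends: the corollary is stated as an immediate consequence of Theorem \ref{main theorem}, obtained by taking the extremal case $m = N-3$ so that the forbidden interval's right endpoint is $(N-2)(N-1) = p^r(p^r-1)$ with $N = d = p^r+1$. Your verification that $m = N-3 \geq 0$ and that the endpoint is included are exactly the points one needs to check, so nothing is missing.
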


One of the simplest cases is when $X$ is the degree 5 Fermat
hypersurface in $\PP^5$ over $\overline{\mathbb{F}}_2$. The above
theorem shows that on this $X$ very free rational curves can only
exist in degrees $5,9,10$ and all $e\geq 13$. In the recent paper
\cite{bridges}, the authors showed that there is no very free
rational curve in degree 5 and they explicitly gave a very free
rational curve of degree $9$.

\begin{defn}
A \textit{rational normal curve of degree $e$} is a rational curve
$f:\PP^1\hookrightarrow\PP^N$ of degree $e$ whose linear span is a
subspace $\PP^e\subset\PP^N$.
\end{defn}
\noindent In particular, a necessary (and also sufficient) condition
to have a rational normal curve of degree $e$ in $\PP^N$ is that
$e\leq N$. One candidate of a very free rational curve of low degree
on $X$ is given by the following
\begin{prop}\label{rational normal}
Let $X=X_{d,N}$ be as above. If $C\subset X$ is a rational normal
curve of degree $N$ (viewed as a rational curve on $\PP^N$), then
$C$ is very free on $X$.
\end{prop}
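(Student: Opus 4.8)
The plan is to verify the cohomological criterion for ampleness on the curve $C\cong\PP^1$: writing $f:\PP^1\to X$ for the inclusion and $f^*T_X=\bigoplus\calO(a_i)$, the bundle is ample iff every $a_i\geq 1$, which is equivalent to $H^1(\PP^1,f^*T_X(-2))=0$ since $H^1(\PP^1,\calO(a_i-2))=0$ exactly when $a_i\geq 1$. First I would record that, with $d=p^r+1\not\equiv 0\pmod p$, the Fermat hypersurface $X$ is smooth and $N_{X/\PP^N}\cong\calO_X(d)$, so $f^*N_{X/\PP^N}\cong\calO_{\PP^1}(dN)$ as $\deg f=N$. Twisting the tangent sequence by $\calO(-2)$ gives $0\to f^*T_X(-2)\to f^*T_{\PP^N}(-2)\to\calO(dN-2)\to 0$, and splicing with the twisted Euler sequence $0\to\calO(-2)\to\calO(N-2)^{\oplus(N+1)}\to f^*T_{\PP^N}(-2)\to 0$ yields a composite bundle map $\Psi:\calO(N-2)^{\oplus(N+1)}\to\calO(dN-2)$. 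Since the normal map is the differential of the defining equation, $\Psi$ sends $(g_0,\dots,g_N)\mapsto\sum_i(\partial F/\partial X_i)|_f\,g_i=d\sum_i f_i^{p^r}g_i$, where $f=[f_0:\cdots:f_N]$ with each $f_i$ a binary form of degree $N$.

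Because $H^1(\PP^1,\calO(N-2))=0$ for $N\geq1$, the kernel $K(-2):=\ker\Psi$ satisfies $H^1(K(-2))=\operatorname{coker}(\Psi_*)$ on global sections, and the Euler inclusion $0\to\calO(-2)\to K(-2)\to f^*T_X(-2)\to 0$ then exhibits $H^1(f^*T_X(-2))$ as a quotient of $H^1(K(-2))$. Hence it suffices to prove that the multiplication map $\Psi_*$ is surjective onto the space $V_{dN-2}$ of binary forms of degree $dN-2$. Here the \emph{rational normal} hypothesis enters decisively: nondegeneracy of $C$ means $f_0,\dots,f_N$ are linearly independent, i.e.\ a basis of the space $V_N$ of degree-$N$ binary forms. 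The crucial point is that the partials $\partial F/\partial X_i=dX_i^{p^r}$ are $p^r$-th powers, so each $f_i^{p^r}$ lies in the Frobenius-twisted subspace $V_N^{(p^r)}\subset V_{Np^r}$ spanned by the monomials $s^{(N-j)p^r}t^{jp^r}$. As $k$ is perfect the map $h\mapsto h^{p^r}$ is injective and $p^r$-semilinear, so $\{f_i^{p^r}\}$ is again a $k$-basis of $V_N^{(p^r)}$, and therefore the image of $\Psi_*$ is exactly the product $V_N^{(p^r)}\cdot V_{N-2}$.

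It then remains to show $V_N^{(p^r)}\cdot V_{N-2}=V_{dN-2}$, which I would settle by a monomial count: a monomial $s^at^b$ of degree $dN-2$ lies in the product iff $b=jp^r+l$ for some $0\leq j\leq N$ and $0\leq l\leq N-2$, that is, iff $b$ lies in the union of the intervals $[jp^r,\,jp^r+N-2]$. Consecutive intervals abut or overlap precisely when $p^r\leq N-1$, i.e.\ when $d=p^r+1\leq N$ --- exactly the Fano condition --- and in that range they cover $[0,dN-2]$, so every monomial is attained and $\Psi_*$ is surjective. This interval-covering step, together with the recognition that Frobenius forces the $f_i^{p^r}$ into the sparse subspace $V_N^{(p^r)}$, is the heart of the argument and the place I expect the main difficulty: everything turns on the numerical coincidence that the $p^r$-spacing of the exponents matches the length $N-1$ of the available multipliers exactly when $N\geq p^r+1$. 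Granting surjectivity of $\Psi_*$ we obtain $H^1(K(-2))=0$, hence $H^1(f^*T_X(-2))=0$, so $f^*T_X$ is ample and $C$ is very free.
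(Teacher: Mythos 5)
Your proof is correct, but it takes a genuinely different route from the paper's. The paper works with the auxiliary bundle $\mathscr{F}=\ker\bigl(\calO_X(1)^{N+1}\xrightarrow{(X_i^{p^r})}\calO_X(p^r+1)\bigr)$, identifies it structurally as $(F^*)^r\Omega^1_{\PP^N}\otimes\calO_X(p^r+1)$ via the Frobenius pullback of the Euler sequence, and then uses nondegeneracy only through the splitting $f^*\Omega^1_{\PP^N}\cong\calO(-N-1)^{\oplus N}$ to conclude $f^*\mathscr{F}\cong\calO(N-p^r)^{\oplus N}$, of which $f^*T_X$ is a quotient. Your bundle $K$ is exactly $f^*\mathscr{F}$ (untwist your $K(-2)$ and compare with the second row of the paper's diagram), but instead of computing its splitting type you prove ampleness of the quotient $f^*T_X$ through the criterion $H^1(f^*T_X(-2))=0$, reducing to surjectivity of the multiplication map $V_N^{(p^r)}\cdot V_{N-2}\to V_{dN-2}$ and settling that by the interval-covering count. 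Both arguments hinge on the same two facts --- the partials of the Fermat equation are $p^r$-th powers, and nondegeneracy makes the $f_i$ a basis of $V_N$ --- and both produce the same numerical threshold ($p^r\le N-1$, equivalently $N\ge d$, which is implicit in the proposition's hypotheses in both treatments). The paper's Frobenius identification packages your monomial combinatorics into a two-line computation and yields the precise splitting type $\calO(N-p^r)^{\oplus N}$ rather than mere ampleness; your version is more elementary and self-contained, at the cost of redoing by hand the bookkeeping that the isomorphism $\mathscr{F}\cong(F^*)^r\Omega^1_{\PP^N}\otimes\calO_X(p^r+1)$ automates. Two small remarks: injectivity of $h\mapsto h^{p^r}$ needs only that $k[s,t]$ is a domain (perfectness of $k$ is what you actually use to show the $f_i^{p^r}$ remain linearly independent over $k$), and you should note explicitly that $\Psi$ is surjective as a sheaf map (it is, being a composite of the two Euler/normal surjections) so that your short exact sequence defining $K(-2)$ is legitimate.
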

\noindent This was first obtained in Th\'eor\`eme 3.16(b) of
\cite{conduche}. Proposition 3.19 of \cite{conduche} shows that such
rational normal curves exist only for $N\geq 2p^r-1$.

\section{Proofs}

Let $X=X_{d,N}$. We use the following diagram to investigate the
tangent sheaf of $X$.
\begin{equation}\label{basic diagram}
\xymatrix{
 &0 &0 & &\\
 0\ar[r] &T_X\ar[r]\ar[u] &T_{\PP^N}|_X\ar[r]\ar[u]
 &\calO_X(p^r+1)\ar[r] &0\\
 0\ar[r] &\mathscr{F}\ar[r]\ar[u]
 &\calO_X(1)^{N+1}\ar[u]\ar[r]^{(X_i^{p^r})}
 &\calO_X(p^r+1)\ar@{=}[u]\ar[r] &0\\
 &\calO_X\ar[u]\ar@{=}[r] &\calO_X\ar[u]^{(X_i)} & &\\
 &0\ar[u] &0\ar[u] & &
}
\end{equation}
We dualize the second column and get
$$
\xymatrix@C=0.5cm{
  0 \ar[r] & \Omega_{\PP^N}^1\otimes\calO_X(1) \ar[rr] && \calO_X^{N+1} \ar[rr]^{(X_i)} && \calO_X(1) \ar[r] & 0 }
$$
Let $F:X\rightarrow X$ be the Frobenius morphism. We apply $(F^*)^r$
to the above exact sequence and get
\begin{equation}
\xymatrix@C=0.5cm{
  0 \ar[r] & (F^*)^r\Omega_{\PP^N}^1\otimes\calO_X(p^r) \ar[rr] && \calO_X^{N+1} \ar[rr]^{(X_i^{p^r})} && \calO_X(p^r) \ar[r] & 0 }
\end{equation}
Compare this with the second row of \eqref{basic diagram}, we get
\begin{equation}\label{F expression}
\mathscr{F}\cong (F^*)^r\Omega_{\PP^N}^1\otimes\calO_X(p^r+1)
\end{equation}

\begin{proof}[Proof of Proposition \ref{rational normal}]
Since $f:C\rightarrow\PP^N$ is a rational normal curve, we have
$$
f^*\Omega_{\PP^N}^1\cong \calO_{\PP^1}(-N-1)^{\oplus N}
$$
Hence
$$
f^*\mathscr{F}=\calO_{\PP^1}((-N-1)p^r+(p^r+1)N)^{\oplus
N}=\calO_{\PP^1}(N-p^r)^N
$$
is very ample. The first column of diagram \eqref{basic diagram}
shows that $f^*T_X$ is very ample.
\end{proof}

\begin{proof}[Proof of Theorem \ref{main theorem}]
Consider the first column in diagram \eqref{basic diagram}, we know
that if the splitting of $f^*\mathscr{F}$ has a negative summand or
has at least two copies of $\calO_{\PP^1}$, then $f^*T_X$ is not
ample. To avoid this from happening, the best situation is when
$\Omega_{\PP^N}^1|_C$ is balanced. Now we assume that all the above
splittings are balanced. Let $a=[\frac{(N+1)e}{N}]=e+[\frac{e}{N}]$.
Then
$$
\Omega_{\PP^N}^1|_C\cong\calO(-a)^l\oplus\calO(-a-1)^{l'}
$$
with $l'=(N+1)e-Na$ and $l=N-l'$. Then it follows from \eqref{F
expression} that
$$
f^*\mathscr{F}\cong\calO(b_1)^l\oplus\calO(b_2)^{l'}
$$
with $b_1=-ap^r+e(p^r+1)$ and $b_2=-(a+1)p^r+e(p^r+1)$. Note that
$f^*\mathscr{F}$ is highly unbalanced unless $e$ is a multiple of
$N$. This is because when $e$ is a multiple of $N$, then $l'=0$ and
there is no summand of the form $\calO(b_2)$ in the above splitting.
If $mN<e<(m+1)N$, then $a=e+m$ and
$$
b_2=-(a+1)p^r+e(p^r+1)=e-(m+1)(N-1)
$$
Hence we have $b_2<0$ if $e<(m+1)(N-1)$. If $e=(m+1)(N-1)$, then
$$
l'=(N+1)e-Na=N-m-1
$$
The theorem follows easily from this computation.
\end{proof}

\begin{proof}[Proof of Proposition \ref{dimension jump}] A degree $e$
rational curve $f:\PP^1\rightarrow X$ can be written as
$$
t\mapsto[f_0:f_1:\cdots:f_d]
$$
where $f_i=\sum_{j=0}^{e}a_{ij}t^j$ are polynomials of degree at
most $e$. The condition for the image of $f$ to be contained in $X$
is given by $F=\sum f_i^d=0$ as a polynomial in $t$. If $f$ is free,
then Riemann-Roch tells us that $h^0(\PP^1,f^*T_X)=d+e-1$ which is,
by definition, the expected dimension of $M_e$. By explicit
computation we have
$$
f_i^d=(\sum_{j=0}^{e}a_{ij}t^j)^{p^r}(\sum_{j=0}^{e}a_{ij}t^j)=(\sum_{j=0}^{e}
a_{ij}^{p^r}t^{jp^r})(\sum_{j=0}^{e} a_{ij}t^j)
$$
If $e<p^r-1$ the coefficient of $t^{jp^r-1}$, $j=1,\ldots,e$, is
automatically 0. Hence the actual dimension of $M_e$ is bigger than
the expected dimension.
\end{proof}

\begin{proof}[Proof of Lemma \ref{reduce dimension}] It is easy to
realize $X_{d,N}$ as hyperplane section of $X_{d,N+1}$. This implies
that any very free rational curve on $X_{d,N}$ gives a very free
rational curve on $X_{d,N+1}$.

\end{proof}

\section{Acknowledgement}
This work was done when the author was a Ph.D. student at Columbia
University. The author would like to thank his advisor, Johan de
Jong, for many stimulating discussions. He would also like to thank
Yi Zhu for requesting a write-up of these results, which leads to
this note. Burt Totaro made many suggestions on a preliminary
version. Finally, many thanks to the referee for making the author
aware the work \cite{conduche} of D. Conduch\'e.

\end{document}